\title{Packing perfect matchings in random hypergraphs}
\author{ Asaf Ferber
\thanks{Department of Mathematics, Yale University, and Department of Mathematics, MIT. Emails:
asaf.ferber@yale.edu, and ferbera@mit.edu.}\and Van Vu
\thanks{Department of Mathematics, Yale University. Email: van.vu@yale.edu. Supported by   NSF  grant DMS-1307797  and AFORS grant
FA9550-12-1-0083.}}
\date{\today}
\theoremstyle{plain}
\newtheorem{theorem}{Theorem}[section]
\newtheorem{claim}[theorem]{Claim}
\newtheorem{observation}[theorem]{Observation}
\newtheorem{remark}[theorem]{Remark}
\newtheorem{question}[theorem]{Question}
\begin{document}
\maketitle

\begin{abstract}
  We introduce a  new procedure for generating the binomial random
  graph/hypergraph models, referred to as \emph{online
  sprinkling}. As an illustrative application of this method, we show that for any
  fixed integer $k\geq 3$, the binomial $k$-uniform random hypergraph
  $H^{k}_{n,p}$ contains $N:=(1-o(1))\binom{n-1}{k-1}p$ edge-disjoint
  perfect matchings, provided $p\geq \frac{\log^{C}n}{n^{k-1}}$,
  where $C:=C(k)$ is an integer depending only on $k$. Our result for $N$ is asymptotically optimal and for
   $p$ is optimal up to the $polylog(n)$ factor. This significantly improves a result of Frieze and Krivelevich.
\end{abstract}

\section{Introduction}
Since its introduction in 1960~\cite{ErdosRenyi}, the Erd{\H
o}s-R{\'e}nyi random graph/hypergraph model has been one of the main
objects of study in probabilistic combinatorics. Given $p\in [0,1]$
and $k\in \mathbb{N}$, the random $k$-uniform hypergraph model
$H^k_{n,p}$ is defined on a vertex set $[n]:=\{1,\ldots,n\}$,
obtained by picking each $k$-tuple $e\in \binom{[n]}{k}$ to be an
edge independently with probability $p$. The case $k=2$ reduces to
the standard binomial graph model, denoted as $G_{n,p}$.

A useful technique in the theory of random graphs is the \emph{multiple exposure} technique
(also referred to as \emph{sprinkling}). Given $p_1,\ldots,p_{\ell}\in
[0,1]$ for which $\prod_{i=1}^{\ell}(1-p_i)=1-p$, one can easily
show that a hypergraph $H^k_{n,p}$ has the same distribution as
a union of independently generated hypergraphs $H=H_1\cup \ldots
\cup H_{\ell}$, where for each $i$, $H_i=H^k_{n,p_i}$
(for more details, the reader is referred to \cite{bollobas1998random}, \cite{JLR} or \cite{S} for a more relevant approach). Indeed, note that the probability for a fixed $k$-tuple $e\in \binom{[n]}{k}$ to \emph{not appear} in $\cup_iE(H_i)$ is exactly $\prod_{i=1}^{\ell}(1-p_i)=1-p$, and clearly, all the choices are being made independently.

The power of this technique comes from the ability to ``keep some randomness" in cases where an iterative approach is convenient. A typical scenario in applications is to expose $H^k_{n,p}$ in stages, where in each stage, a hypergraph $H_i=H^k_{n, p_i}$ is being generating, independently at random from all the previously exposed hypergraphs. Our goal is to show that in each stage $j$, the current hypergraph $\cup_{i\leq j}H_i$ gets
closer to a target graph property $\mathcal P$, until in stage $\ell$ it satisfies it.
This technique has became standard over the years and is being used in almost every paper dealing with random graphs/hypergraphs (for a very nice and classical example, the reader is referred to \cite{bollobas1984evolution} and \cite{posa1976hamiltonian}).

In this paper we want to consider a slightly different perspective of the sprinkling method, which gives it a bit more power. Before describing it, let us have a closer look at the way a hypergraph $H^k_{n,p}$ is being generated. By definition, for every $k$-tuple $e\in \binom{[n]}{k}$, we query whether $e\in E(H)$ with probability $p$, independently at random. A natural question arises is:

\begin{question}Does the order of the edge queries matter ?
\end{question}

Clearly, the answer is ``no", as long as all the queries are being made independently at random, and this observation serves as the basis for our technique.

Our goal is to create a  randomized algorithm  that whp (with high probability, that is, with probability tending to 1 as $n$ tends to infinity) finds a large structure $S$ in $H^k_{n,p}$. We aim to find the target structure as a subgraph of the ``online generated" random hypergraph $H$. That is, during the execution of the algorithm, a random hypergraph is
generated and the target structure is constructed together \emph{step by step}. We refer to this technique as \emph{online sprinkling}.

The way the algorithm works is as follows: in each time step $i$ of the algorithm, a subset $E_i\subseteq \binom{[n]}{k}$ is being chosen
according to some distribution. Then, we query every edge in $E_i$ (independently) with some probability $p_i$, which is also being chosen according to some distribution. All the chosen edges will be part of the randomly generated hypergraph.

For each $k$-tuple $e\in \binom{[n]}{k}$, let
$$\omega(e)=1-\prod_{i: e\in E_i}(1-p_i)$$ be the \emph{weight} of
$e$ at the end of the algorithm. Note that $\omega(e)$ is a random variable (as our algorithm is a randomized one), and corresponds to the probability for $e$ to appear in the hypergraph obtained at the end of the algorithm. Clearly, if $\omega(e)\leq p$ for
each $k$-tuple $e$, then the resulting hypergraph can be coupled as
a subgraph of $H^k_{n,p}$.

The power of this approach comes from the flexibility in defining the sets $E_i$ and the edge-probabilities $p_i$. By selecting these sets and probabilities properly, we can govern the process
towards our goal.

As an illustrative example for this approach, we examine the problem of finding edge-disjoint copies of some given structure $S$ in $H^k_{n,p}$. In particular, in this paper we consider only the case where $S$ is a \emph{perfect matching} and $k\geq 3$, as here, many technical issues that may appear for other structures $S$ or for the case $k=2$ will become trivial (more complicated applications will appear in followup papers).

The problem of finding the threshold behavior for the appearance of  a perfect matching in a random
hypergrpah is notoriously hard and is a central problem in probabilistic combinatorics, known as
Shamir's Conjecture. The main difficulty is the lack of general tools such as the classical theorem by Hall (see e.g.,\cite{West}) for finding perfect matchings. This problem was solved by Johansson, Kahn and
Vu \cite{JKV}, who showed that a perfect matching typically appears in $H^k_{n,p}$ as soon as $p\geq
\frac{C\log n}{n^{k-1}}$ (note that $n$ must be divisible by $k$, as otherwise a perfect matching cannot exist). Once Shamir's Conjecture has been settled, it is thus natural to ask for
edge-disjoint perfect matchings covering ``most" of the edges. This problem has been considered by Frieze and Krivelevich in \cite{FK}, where they showed, among other things, that one can pack ``most" of the edges of a typical $H^k_{n,p}$ with perfect matchings, as long as $p>\log^2n/n$. Moreover, they showed that there embedding can be applied on a pseudorandom model with the same density. Considering only the random model, in the following theorem we significantly improve their result to the optimal (up to a $polylog(n)$ factor) edge-probability.

\begin{theorem}
  \label{packing not Kpartite}
Let $k\geq 3$ be a positive integer and let
$p\geq \frac{\log^{5k}n}{n^{k-1}}$. Then, whp
$H^k_{kn,p}$ contains
$t:=(1-o(1))\binom{kn-1}{k-1}p$ edge-disjoint perfect matchings.
\end{theorem}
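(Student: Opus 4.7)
I would attempt Theorem~\ref{packing not Kpartite} by applying the online sprinkling framework directly: construct $M_1,M_2,\dots,M_t$ sequentially in ``rounds,'' and at each round use online sprinkling to reveal just enough randomness to locate the next perfect matching, while charging queries carefully to a per-edge probability budget of $p$.

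\textbf{Round $j$ in two phases.} For a single round $j$, I would find $M_j$ in two phases. The \emph{nibble phase} builds a near-perfect matching $M_j'$ by random probing: we repeatedly select an unmatched vertex $v$, sample a random $(k-1)$-subset $S$ among the still-unmatched vertices such that $\{v\}\cup S$ has not been used in $M_1,\dots,M_{j-1}$, and query $\{v\}\cup S$ with a small probability $q_j$ (a new independent Bernoulli trial each time). Successful queries are inserted into $M_j'$. We continue until only a tiny residue set $R_j$ of unmatched vertices remains, of size at most roughly $kn/\mathrm{polylog}(n)$. The \emph{finishing phase} then completes $R_j$ into a perfect matching using a separate slice of the probability budget reserved from the start, either via a pre-built absorbing sub-hypergraph or by invoking the Johansson--Kahn--Vu theorem on an auxiliary random sub-hypergraph whose vertex set is $R_j$ together with some helper vertices, at a slightly larger per-query probability (since $|R_j|$ is small, the JKV threshold $\log|R_j|/|R_j|^{k-1}$ is easier to meet on a reserved budget).

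\textbf{Budget accounting.} The probabilities $q_j$ in the nibble phase and the per-query probabilities in the finishing phase are chosen so that the total accumulated weight $\omega(e)=1-\prod_{j,i}(1-p_{j,i}\mathbbm{1}[e\in E_{j,i}])$ stays at most $p$ for every potential edge $e$. In the nibble phase of one round each edge is touched at most $O(1)$ times; across $t=(1-o(1))\binom{kn-1}{k-1}p$ rounds the expected total weight per edge works out to $(1+o(1))p$, but to beat this expectation-to-worst-case gap we must apply Chernoff and take a union bound over all $\binom{kn}{k}$ edges and all $t$ rounds. This is precisely where the polylogarithmic slack $\log^{5k}n$ in the assumption on $p$ is consumed. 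Once the weight bound is verified for every $e$, the produced hypergraph couples into $H^k_{kn,p}$ by the online-sprinkling principle discussed in the introduction, and the constructed matchings become edge-disjoint perfect matchings in $H^k_{kn,p}$.

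\textbf{Main obstacle.} The hardest part will be the finishing phase together with its concentration bookkeeping. When $R_j$ becomes small, naive random probing is wildly inefficient because a random $k$-tuple rarely lies inside $R_j$, so the finishing phase must perform directed queries restricted to $k$-tuples touching $R_j$ and use a dedicated probability budget for them. The accounting becomes delicate because $R_j$ is itself a complicated random object depending on the entire history of the process, which forces the concentration argument to be a martingale-type bound (e.g. Azuma or a bounded-differences Chernoff) rather than a clean independent-sum Chernoff; controlling the union bound over all edges and all $t$ rounds under this dependence is what inflates the required $\log n$ factor to $\log^{5k}n$ and is the technically heaviest step of the proof.
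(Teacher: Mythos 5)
Your plan is not the paper's proof of this statement --- the paper deduces Theorem \ref{packing not Kpartite} in a few lines from the $k$-partite Theorem \ref{main1}, by taking $\log^{1.5}n$ independent uniformly random partitions of $[kn]$, assigning each exposed edge to a uniformly chosen ``relevant'' partition, and applying Markov's inequality --- but the more important issue is that your direct construction has genuine gaps. First, the nibble-phase budget accounting is internally inconsistent as stated: if the expected accumulated weight per edge is $(1+o(1))p$, no Chernoff/union-bound step can force the worst case below $p$; you must arrange the expectation to be at most $(1-\varepsilon)p$ and reserve the rest, and this is exactly the nontrivial computation in the paper (Claim \ref{claim1}): an edge keeps being queried only while all $k$ of its vertices are unmatched, the probability of which decays like $(1-\delta)^{ks}$ after $s$ steps, while the per-step query probability grows like $\delta/n_s^{k-1}$; the resulting geometric series converges to at most $(1-\gamma)n^{-(k-1)}$ per round only because the exponent $k$ beats $k-1$. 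Your assertion that ``each edge is touched at most $O(1)$ times'' per round is neither obviously true nor the relevant quantity; the analogous expectation computation for your per-vertex probing scheme is the heart of the matter and is missing.

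Second, and more seriously, the finishing phase cannot run on ``a separate slice of the budget'' without a structural fact you never establish. Completing the residue $R_j$ (of size $\approx kn/\mathrm{polylog}(n)$) via Johansson--Kahn--Vu requires per-query probability of order $\log n/|R_j|^{k-1}=\mathrm{polylog}(n)/n^{k-1}$, so if a fixed $k$-tuple could lie inside a constant fraction of the residues its accumulated finishing weight would be of order $t\cdot\mathrm{polylog}(n)/n^{k-1}=p\cdot\mathrm{polylog}(n)\gg p$. The coupling only survives if each fixed $k$-tuple lies inside roughly an $(|R_j|/kn)^k$ fraction of the residues; the paper obtains this because the leftover sets are \emph{uniformly} random (a relabelling symmetry, Claim \ref{small bite exists}) and the rounds are run completely independently of the history, so Chernoff applies (Claim \ref{garbage is uniform}), with edge-disjointness recovered a posteriori since for $k\geq 3$ whp no $k$-tuple is ever queried twice (Claim \ref{matchings disjoint}). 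Your scheme instead forces round $j$ to avoid the edges of $M_1,\dots,M_{j-1}$, which destroys precisely this independence; a martingale bound controls fluctuations but does not by itself show that $R_j$ is spread out over the vertex set, and the alternative ``absorbing sub-hypergraph / helper vertices'' is not available as sketched, since helper vertices are already covered by $M_j'$ and releasing them is a genuinely different argument. Without these two ingredients the per-edge weight bound, and hence the coupling into $H^k_{kn,p}$, fails by polylogarithmic factors.
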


\begin{remark} We would like to give the following remarks:
\begin{itemize}
\item The case $k=2$ is a bit more complicated to handle using our technique, and in fact better tools are known for this case (generalizations of Hall's Theorem for finding ``many" edge-disjoint perfect matchings). For a non-trivial example of applying the ``online sprinkling" technique for graphs, the reader is referred to \cite{FS}.
\item Our $p$ is optimal up to a $polylog(n)$ factor and $t$ is asymptotically optimal. In fact, as we explain bellow, our proof strategy will always yield a lost of some $\log$'s in $p$, and therefore we do not put any effort in optimizing its power. Even though, we believe that the same conclusion should hold for every edge-probability $p$ which is asymptotically larger than the threshold behavior.
\item Our proof heavily relies on the ability to embed \emph{one} perfect matching (that is, on the result from \cite{JKV}), and does not provide an alternative proof for that.
\end{itemize}
\end{remark}

For some technical reasons, it will be more convenient for us to work in a $k$-partite model. Let $\mathcal H^k_{n\times k,p}$ be a random $k$-partite, $k$-uniform hypergraph, with parts
$V_1,\ldots,V_k$, each of which of size $n$, obtained by adding each
possible $k$-tuple $e\in V_1\times V_2\times \ldots V_k$ as an edge with probability $p$, independently at random. We prove the following, seemingly weaker, statement about finding edge-disjoint perfect matchings in $H^k_{n\times k,p}$, and then show how to to derive Theorem \ref{packing not Kpartite} in a quite straightforward way.

\begin{theorem}
  \label{main1}
Let $k\geq 3$ be a positive integer and let
$p\geq \frac{\log^{4k}n}{n^{k-1}}$. Then, whp a
hypergraph $H^k_{n\times k,p}$ contains
$(1-o(1))n^{k-1}p$ edge-disjoint perfect matchings.
\end{theorem}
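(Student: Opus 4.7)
The strategy is an iterative online-sprinkling procedure that constructs the $t = (1-o(1))n^{k-1}p$ perfect matchings one by one. Choose a slowly vanishing parameter $\eta = 1/\log n$, and split the per-edge budget as $p = p_0 + \eta p$ with $p_0 = (1-\eta)p$. At the outset, expose $G_0 \sim H^k_{n\times k, p_0}$ in a single shot. For $i = 1, \dots, t$, having already constructed edge-disjoint perfect matchings $M_1, \dots, M_{i-1}$: (i) online-sprinkle a fresh hypergraph $H_i$ by querying each edge in $E_i := V_1 \times \cdots \times V_k \setminus (M_1 \cup \cdots \cup M_{i-1})$ independently with probability $q := \eta p / t = \Theta(1/n^{k-1})$; (ii) find a perfect matching $M_i$ inside the current residual $R_i := (G_0 \cup H_1 \cup \cdots \cup H_i) \setminus (M_1 \cup \cdots \cup M_{i-1})$.

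\textbf{Budget verification.} Any edge $e$ that eventually belongs to some $M_{j^\star}$ lies in $E_i$ only for $i \leq j^\star$, giving weight $\omega(e) = 1 - (1-p_0)(1-q)^{j^\star} \leq p_0 + j^\star q \leq p_0 + tq = p$. An unused edge has the analogous bound $\omega(e) \leq p_0 + tq = p$. Hence the random hypergraph produced by the algorithm couples as a subgraph of $H^k_{n\times k, p}$, and the matchings $M_1, \dots, M_t$ are edge-disjoint by construction.

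\textbf{Existence of each $M_i$.} The residual $R_i$ stochastically dominates $G_0 \setminus (M_1 \cup \cdots \cup M_{i-1})$, whose minimum degree is at least $n^{k-1} p_0 - (i-1) \geq \eta \, n^{k-1} p \geq \log^{4k-1} n \gg \log n$ throughout. Under a \emph{robust} version of the Johansson--Kahn--Vu theorem --- to the effect that $H^k_{n \times k, p_0}$ whp contains a perfect matching even after deletion of $o(n^{k-1}p_0)$ (history-dependent) edges at each vertex --- one extracts $M_i$ from $R_i$ and the algorithm succeeds.

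\textbf{Main obstacle.} The crux is establishing the robust JKV property above: $R_i$ is not a random hypergraph but the residual after deleting matchings whose edges depend on the randomness of $G_0$ and all the prior $H_\ell$'s, so the classical JKV theorem does not apply verbatim. The fresh per-round sprinkling $H_i$ is engineered precisely to re-inject independence: one would like to couple $R_i$ with a freshly drawn $H^k_{n\times k, p_i^{\text{eff}}}$ for some effective probability $p_i^{\text{eff}}$ close to $p_0 + iq$, so that the conditioning on previous choices is absorbed into the fresh random ``layer'' provided by $H_i$. Making this coupling precise --- or, equivalently, proving that deleting $o(n^{k-1}p)$ edges per vertex from $H^k_{n\times k, p_0}$ in a history-dependent fashion whp preserves a perfect matching --- is the technically delicate step, and it is here that the online-sprinkling framework pays off, since each round carries its own quantum of fresh randomness to drive a coupling/union-bound argument over the small set of history-dependent deletions.
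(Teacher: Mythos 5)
Your proposal has a genuine gap at exactly the step you flag as the ``crux'': the existence of $M_i$ in the residual $R_i$ is not established, and the lemma you would need is both misstated and unavailable. First, the quantifiers are off: by round $i\approx t=(1-o(1))n^{k-1}p$ the previous matchings have deleted $i-1=(1-o(1))n^{k-1}p_0$ edges at \emph{every} vertex, i.e.\ all but a $\Theta(1/\log n)$ fraction of each vertex's degree in $G_0$ --- not ``$o(n^{k-1}p_0)$ edges per vertex.'' A resilience version of Johansson--Kahn--Vu allowing deletion of a $(1-o(1))$ fraction of the edges at each vertex is false for adversarial deletions (constant-fraction local deletions already destroy all perfect matchings), so you would have to exploit the specific structure of the history-dependent deletions, which is precisely the hard, unproven part. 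Second, the fresh randomness you inject per round cannot rescue this: with $q=\eta p/t$ the expected number of fresh edges per vertex in a single $H_i$ is $n^{k-1}q\approx \eta=1/\log n=o(1)$, far below the $\Theta(\log n)$ per-vertex degree needed even for a perfect matching threshold, so $H_i$ alone has many isolated vertices and no straightforward coupling of $R_i$ with a fresh $H^k_{n\times k,p_i^{\mathrm{eff}}}$ can absorb the conditioning. As written, the argument reduces the theorem to a statement at least as hard as the theorem itself.

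The paper's proof is arranged precisely to avoid any resilient or conditioned use of JKV. It never exposes a large hypergraph up front: in Phase~1 each of the $N$ rounds independently builds only an \emph{almost} perfect matching (covering all but $n/\log^3 n$ vertices of each part) by a nibble with tiny per-step exposures among edges disjoint from the current matching, and the only global control needed is that no $k$-tuple accumulates weight more than $(1-\varepsilon/2)p$ (Claim \ref{claim1}) and that the matchings are edge-disjoint, which for $k\geq 3$ follows from a simple union bound (Claim \ref{matchings disjoint}). In Phase~2 each $M_i$ is completed by exposing a \emph{fresh} random hypergraph on the uncovered set $U_i$ with probability $\log^5 n/n^{k-1}$, so Theorem \ref{JKV kpartite} applies verbatim to a genuinely random hypergraph; the uniformity of the $U_i$ (Claim \ref{garbage is uniform}) keeps the extra weight per edge at $o(p)$. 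If you want to salvage your scheme, you must either prove the required resilience statement for these particular history-dependent deletions or restructure so that every application of JKV is to freshly exposed randomness, as the paper does.
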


{\bf Organization of the paper.} In Section \ref{sec:outline} we provide with a brief outline of the general strategy for proving Theorem \ref{main1}, explaining the difficulties one may run into while using the ``online sprinkling" technique. In Section \ref{sec:derivation} we show how to derive Theorem \ref{packing not Kpartite} from Theorem \ref{main1}. In Section \ref{sec:tools} we present some tools and auxiliary lemmas to be used in the proof of Theorem \ref{main1}. Lastly, in Section \ref{sec:main} we prove our main result, namely Theorem \ref{main1}.

\section{A general outline}
\label{sec:outline}
Our proof, in large, is divided into two main phases. In Phase 1 we wish to find the ``correct number" of edge-disjoint matchings which are not complete, where in Phase 2 we wish to ``complete" each of which into a perfect matching in an edge-disjoint way (and this will be done using the Johansson, Kahn and Vu's result \cite{JKV}). So far, our proof strategy is not new, and in fact the exact same strategy has been used in many papers during the years (perhaps the most impressive recent result obtained by a similar outline is the one of Keevash \cite{Keevash}, where he solved a problem from the 19th century). The main idea behind it is that, usually, it is much easier to find ``almost spanning" structures than ``spanning" ones, and if one can embed the almost spanning substructure ``nicely" then there is a hope to complete it to desired spanning structure. Bellow, we give a brief description of each of the two phases, and explain the difficulties we should overcome during the formal proof.

{\bf Phase 1.} The way we handle the ``almost spanning structure" is more or less identical to the ``nibbling" idea,  introduced by
Ajtai-Komlos-Szemer\'edi \cite{AKS} and R\"odl  \cite{Rodl}. Roughly speaking, we split Phase 1 into $N$ \emph{Rounds}, where each round is being further divided into \emph{Steps}. In each Round $i$, our goal is to find a ``large" matching $M_i$. In order to do so, we start with an empty matching $M_{i0}$, and in each Step $j$, we extend the current matching $M_{i(j-1)}$ by a ``bit", while exposing edges which are vertex disjoint to $M_{i(j-1)}$. Note that the rounds run independently, while completely ignoring the history. We later show (Lemma \ref{matchings disjoint}) that if $p$ is not too large, then this procedure gives us edge-disjoint matchings whp (we then show how to deal the case where $p$ is large).

Let us focus in one round. The main observation here is that if we expose edges with ``relatively small" probability, then one can easily show (Lemma \ref{small bite exists}) that ``most" of them form a matching (edges which are overlapped with other edges will be just ignored). It is worth mentioning that the nibbling approach is typically being applied in a deterministic setting where a ``nicely behaved" (hyper)graph is given. Then, by sampling ``not too many" edges, one can easily show that most of them form a matching. Therefore, most of the work is focused in showing that the remaining set of edges is still ``nicely behaved". In our setting, as we expose the hypergraph in an online fashion, we will obtain it for free.

A crucial point during this phase, is that, as we show, due to symmetry, each $M_i$ is actually a matching chosen \emph{uniformly} at random. Letting $U_i:=V(H)\setminus \cup M_i$, we obtain $N$ sets $(U_i)_{i=1}^N$, each of which is a random subset chosen according to a uniform distribution. This fact will be useful in Phase 2.

The main problem in Phase 1 is to show that no edge has accumulated ``too much" weight. Namely, let $p_1,\ldots,p_s$ to denote all the edge-probabilities used during the algorithm in order to ``expose" a particular $k$-tuple $e$, we wish to show that $1-\prod_{i=1}^s(1-p_i)\leq (1-\varepsilon/2)p$. Assuming this, we obtain a natural coupling between the hypergraph which has been generated in this phase and a subhypergraph of $H_{n,(1-\varepsilon/2)p}$.

{\bf Phase 2.} In this phase our goal is to complete the matchings into perfect matchings in an edge-disjoint way. To this end, for each $1\leq i\leq N$, we expose all the $k$-tuples in $U_i$ with probability $q=\frac{\log^2n}{|U_i|^{k-1}}$. Then, the main result of \cite{JKV} ensures us a perfect matching in $U_i$ whp (for all $i$ simultaneously). Note that as $|U_i|$ is going to be relatively small (some natural restriction apply during the proof), it follows that one cannot hope to get the ``correct" edge-probability from our proof and there will always be a lose of few logs. Now, adding such a matching to $M_i$ yields a perfect matching of $H$. It thus remain to show that the matchings are disjoint (Lemma \ref{matchings disjoint}) and that none of the ``new added" edges accumulated more than a weight of (say) $\varepsilon p/3$. Assuming that, there is a natural coupling between the ``new" hypergraph and a subhypergraph of $H^k_{n,\varepsilon p/3}$, and therefore the union of the two hypergraphs generated in both phases, has the same distribution as a subgraph of $H^k_{n,p}$. This will complete the proof.

\section{Derivation of Theorem \ref{packing not Kpartite}}
\label{sec:derivation}
\begin{proof}
  Let $t=\log^{1.5}n$, and take $t$ partitions
  $[kn]:=V_1^{(i)}\cup \ldots \cup V_k^{(i)}$ with parts of size
  precisely $n$, independently, uniformly at random.  For each $k$
  tuple $e\in \binom{[kn]}{k}$, let us define the set of \emph{relevant} partitions for $e$ as $\mathcal R_e:=\{i\leq
  t: e\cap V_j^{(i)}\neq \emptyset \text{ for all }j\leq k\}$. Note that
  $$\Pr\left[i\in \mathcal{R}_e\right]=k!/k^{k}=\Theta(1),$$ and
  that for $i\neq j$, the events ``$i\in \mathcal R_e$" and ``$j\in \mathcal R_e$" are independent.
  Therefore, by Chernoff's bounds  one obtains  that with probability $1-e^{-\Theta(t)}$
  $$\mathcal
  R_e=(1+o(1))(k!/k^{k})t=:r$$ holds for every $k\in \binom{[kn]}{k}$.

  Now,
  expose all the $k$-tuples with probability $p$, independently at
  random, and for each tuple $e\in E(H)$, let $f(e)\in \mathcal R_e$ be a
  uniformly random element. For each $i\leq t$, let $H_i$ be the $k$-partite hypergraph with parts $V_1^{(i)}\times \ldots \times V^{(i)}_k$ obtained by
  taking all the edges $E_i:=\{e\in E(H): f(e)=i\}$, and note that
  $H_i=H^{k}_{n\times k,(1-o(1))p/r}$ (although for $i\neq j$ $H_i$ and $H_j$ are not independent!) and that for $i\neq j$, $E(H_i)\cap E(H_j)=\emptyset$.

  Fixing an $i$, by Theorem \ref{main1} it follows that whp $H_i$ contains $m=(1-o(1))n^{k-1}p/r$ edge-disjoint perfect matchings. Therefore, by applying Markov's inequality, we obtain that for all but $o(t)$  many indices $1\leq i\leq t$, $H_i$ contains $m$ edge-disjoint perfect matchings.

 All in all, we obtain that whp $H$ contains at least $(t-o(t))m=(1-o(1))\binom{n-1}{k-1}p$
  edge-disjoint perfect matchings as required. This completes the proof.
\end{proof}

\section{Tools}
\label{sec:tools}

In what follows, we present some tools that will be useful in our proofs.
\subsection{Threshold for containing a perfect matching}

A key ingredient in our proof is the following $k$-partite version
of the main result in \cite{JKV} which is obtained by a
straightforward modification of its proof (a full proof can be found in \cite{GM}).

\begin{theorem}
  \label{JKV kpartite}
 Let $k$ be a positive integer and let
 $p=\omega\left(\frac{\log n}{n^{k-1}}\right)$. Then, with probability at least $1-n^{-\omega(1)}$, a hypergraph
 $H^k_{n\times k,p}$ contains a perfect matching.
\end{theorem}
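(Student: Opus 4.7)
The plan is to follow the proof of the Johansson--Kahn--Vu theorem from \cite{JKV} and check that every step carries over to the $k$-partite setting with only notational changes. The JKV argument proceeds in two main phases: an ``almost perfect matching'' phase based on a R\"odl-type nibble, and a completion phase based on an entropy / permanent-type lower bound on the number of perfect matchings in a sufficiently regular $k$-uniform hypergraph. The first phase only relies on the fact that for $p \gg \log n/n^{k-1}$ the codegrees of every $(k-1)$-set in $H^k_{n,p}$ are sharply concentrated around $np$; the analogous Chernoff estimate holds verbatim in $H^k_{n\times k,p}$ for $(k-1)$-sets that meet each of $k-1$ distinct parts in a single vertex, and essentially the same argument (random sparsification, deletion of conflicting edges, concentration of surviving codegrees) yields a matching covering all but $n^{1-\delta}$ vertices in each part with probability $1-n^{-\omega(1)}$.

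For the completion phase, one needs a lower bound of the form: any nearly-regular $k$-uniform hypergraph of degree $D$ with good codegree control contains at least $D^n/n^{\Theta(n)}$ perfect matchings, established in JKV via an entropy / Br\'egman--Minc type argument. In the $k$-partite setting the analog is the corresponding lower bound for the number of perfect matchings of a nearly doubly-stochastic $k$-dimensional $0/1$ tensor, which follows from the same entropy argument (and is, if anything, cleaner because the underlying tensor is naturally ``rectangular''). Combining this bound with the usual first-moment / switching argument of JKV then shows that with probability $1-n^{-\omega(1)}$ the leftover random edges are enough to close the nearly-perfect matching into a perfect one.

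The main obstacle is the faithful transcription of the entropy machinery: one has to re-verify the sharp concentration of all the auxiliary quantities used inside JKV's induction (codegrees, numbers of edges between specified subsets across the parts, sizes of various ``exception sets'') in the $k$-partite model, and then confirm that the permanent lower bound still dominates the expected number of obstructions. None of this requires a new idea, but it is technical; a cleaner alternative would be a direct reduction from the non-partite JKV theorem, yet I do not see such a reduction that preserves the $k$-partite structure of the matching (a uniformly random $k$-partition destroys any fixed matching with probability exponentially close to $1$, so restrictions of $H^k_{kn,p}$ do not suffice). Hence the adaptation route appears to be the right one, and it is carried out in detail in \cite{GM}.
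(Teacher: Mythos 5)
Your proposal matches the paper's treatment: the authors give no proof of this theorem either, stating only that it follows by a straightforward modification of the Johansson--Kahn--Vu argument and pointing to \cite{GM} for the full details, which is exactly the route you take. Your additional sketch of which parts of the JKV machinery must be re-verified in the $k$-partite model is consistent with that stance and raises no issues.
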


\subsection{Sum of independent random variables}

We make use of the following concentration result from \cite{McD}
(Theorem 2.5).

\begin{theorem}
  \label{colin}
  Let $X_1,\ldots,X_t$ be independent random variables, with
  $a_k\leq X_k\leq b_k$ for each $k$, for suitable $a_k$ and $b_k$.
  Let $S_t:=\sum X_k$ and let $\mu:=\mathbb{E} [S_t]$. Then, for
  each $\lambda \geq 0$,

  $$\Pr\left[|S_t-\mu|\geq \lambda\right]\leq 2e^{-2\lambda^2/\sum
  (b_k-a_k)^2}.$$
\end{theorem}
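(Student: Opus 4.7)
The plan is to prove Theorem \ref{colin} via the classical Chernoff (exponential moment) method, with Hoeffding's lemma supplying the per-coordinate bound. First I would replace each $X_k$ by its centering $Y_k:=X_k-\mathbb{E}[X_k]$, which satisfies $Y_k\in[a_k-\mathbb{E}[X_k],\,b_k-\mathbb{E}[X_k]]$ and has range of the same length $b_k-a_k$. It then suffices to bound $\Pr[S\ge\lambda]$ for $S:=S_t-\mu$; the lower tail follows by applying the same argument to the $-X_k$'s, and a union bound produces the factor of $2$.

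For the upper tail I would use the exponential Markov inequality
$$\Pr[S\ge\lambda]\;\le\;e^{-s\lambda}\,\mathbb{E}[e^{sS}]\;=\;e^{-s\lambda}\prod_{k=1}^{t}\mathbb{E}[e^{sY_k}],\qquad s>0,$$
where the factorization uses independence of the $X_k$'s. The heart of the argument is Hoeffding's lemma: for any mean-zero $Y$ supported on an interval of length $L$, one has $\mathbb{E}[e^{sY}]\le e^{s^2L^2/8}$. Plugging this in and optimizing the exponent $-s\lambda+(s^2/8)\sum_k(b_k-a_k)^2$ over $s>0$ (the optimum being $s^\star=4\lambda/\sum_k(b_k-a_k)^2$) yields exactly the stated bound.

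The main obstacle is Hoeffding's lemma itself, which I would derive from convexity. For $y\in[c,d]$, convexity of $x\mapsto e^{sx}$ gives $e^{sy}\le\frac{d-y}{d-c}e^{sc}+\frac{y-c}{d-c}e^{sd}$; taking expectations and using $\mathbb{E}[Y]=0$ reduces the claim to the one-variable inequality $\psi(s)\le s^2(d-c)^2/8$, where $\psi$ denotes the logarithm of the resulting explicit function of $s$. This follows from Taylor's theorem once one verifies $\psi(0)=\psi'(0)=0$ together with $\psi''(s)\le(d-c)^2/4$; the latter is the only nontrivial step and amounts to bounding the variance of a two-point distribution on $[c,d]$ by $(d-c)^2/4$, which is a classical (and sharp) one-line estimate. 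Beyond this, every step of the argument is routine, purely analytic, and needs no further combinatorial input.
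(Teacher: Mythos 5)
Your proof is correct: the centering, the exponential Markov bound with independence, Hoeffding's lemma via convexity plus the Taylor/variance estimate, and the optimization $s^\star=4\lambda/\sum_k(b_k-a_k)^2$ indeed yield the stated two-sided bound with the factor $2$. Note that the paper does not prove Theorem \ref{colin} at all -- it is quoted as Theorem 2.5 from McDiarmid's survey \cite{McD} -- and your argument is precisely the standard Chernoff--Hoeffding proof given there, so there is nothing to add.
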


\subsection{Talagrand's inequality}

We also use the following version of Talagrand's inequality  \cite{ChromaticBook} (we remark that in fact, stronger versions exist, see e.g. \cite{MR}, with weaker assumptions on the constants in the bounds bellow, but the following version suffices for our needs).

\begin{theorem}\label{Talagrand}
Let $X$ be a non-negative random variable, not identically $0$,
which is determined by $n$ independent trials $T_1,\ldots,T_n$, and
satisfying the following for some $c,r>0$:
\begin{enumerate} [$(i)$]
\item changing the outcome of any one trial can affect $X$ by at
most $c$, and

\item for any $s$, if $X\geq s$ then there is a set of at most $rs$
trials whose outcomes certify that $X\geq s$.
\end{enumerate}
Then for any $0\leq t\leq \mathbb{E}(X)$,
$$\Pr\left[|X-\mathbb{E}(X)|>t+60c\sqrt{r\mathbb{E}(X)}\right]\leq
4\exp\left(-\frac{t^2}{8c^2r\mathbb{E}(X)}\right).$$
\end{theorem}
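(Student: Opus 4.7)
The plan is to derive this inequality from the abstract Talagrand convex-distance inequality on product probability spaces: for any event $A\subseteq\Omega=\prod_{i=1}^n\Omega_i$,
\[
\Pr[A]\cdot\mathbb{E}[\exp(d_T(\cdot,A)^2/4)]\leq 1,
\]
where $d_T(\omega,A)=\sup_{\alpha\geq 0,\,\|\alpha\|_2=1}\inf_{\omega'\in A}\sum_{i:\omega_i\neq\omega'_i}\alpha_i$ is the convex distance. The standard route is to use conditions $(i)$, $(ii)$ to convert convex-distance lower bounds into lower bounds on gaps of $X$, deduce a ``product estimate'' for $X$, specialize at the median, and finally translate from median to mean.

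The technical heart is the product estimate
\[
\Pr[X\leq b]\cdot\Pr[X\geq s]\;\leq\;\exp\!\Bigl(-\tfrac{(s-b)^2}{4c^2rs}\Bigr)\qquad\text{for all }b<s.
\]
To prove it, set $A=\{X\leq b\}$ and fix any $\omega$ with $X(\omega)\geq s$. By $(ii)$ there is a set $I=I(\omega)\subseteq[n]$ of size $|I|\leq rX(\omega)$ certifying $X\geq X(\omega)$, so every $\omega'\in A$ must disagree with $\omega$ somewhere on $I$. Let $\omega^*$ agree with $\omega$ on $I$ and with $\omega'$ off $I$; then $X(\omega^*)\geq X(\omega)$, and $\omega^*,\omega'$ differ only on $I$. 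By $(i)$ along an edit path from $\omega^*$ to $\omega'$, the number $d$ of coordinates in $I$ where $\omega$ and $\omega'$ disagree satisfies $dc\geq X(\omega^*)-X(\omega')\geq X(\omega)-b$. Plugging $\alpha=\mathbf{1}_I/\sqrt{|I|}$ into the sup defining $d_T$ then gives
\[
d_T(\omega,A)\;\geq\;\frac{X(\omega)-b}{c\sqrt{rX(\omega)}}\;\geq\;\frac{s-b}{c\sqrt{rs}},
\]
by monotonicity of $v\mapsto(v-b)^2/v$ on $v>b$. Markov applied to the Talagrand inequality above then finishes the step.

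With the product estimate, set $m=\mathrm{Med}(X)$. Taking $b=m$, $s=m+u$ uses $\Pr[X\leq m]\geq 1/2$ to yield $\Pr[X\geq m+u]\leq 2\exp(-u^2/(4c^2r(m+u)))$; taking $s=m$, $b=m-u$ symmetrically yields $\Pr[X\leq m-u]\leq 2\exp(-u^2/(4c^2rm))$. Summing the two tails gives the stated form centered at $m$ (with $4$ replacing the individual $2$'s). To recenter at $\mathbb{E}X$, bound $|m-\mathbb{E}X|\leq\mathbb{E}|X-m|=\int_0^\infty\Pr[|X-m|\geq u]\,du$, which by the two tail bounds just derived is $O(c\sqrt{r\mathbb{E}X})$; the ``$+60c\sqrt{r\mathbb{E}X}$'' slack is engineered precisely to absorb this shift, while the hypothesis $t\leq\mathbb{E}X$ ensures $m+u\leq 2\mathbb{E}X$ up to constants, upgrading $4c^2r(m+u)$ to $8c^2r\mathbb{E}X$ in the denominator of the exponent.

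The main obstacle is the convex-distance lower bound in the product estimate: conditions $(i)$ and $(ii)$ must be knit together through the hybrid-point construction, paying simultaneously for ``disagreement inside the certificate'' and for the ``quantitative $X$-gap''. Once that is in place, the remaining work---tail combination, median-to-mean translation, and chasing constants against the $60$ and the $8$---is routine bookkeeping with Gaussian tail integrals.
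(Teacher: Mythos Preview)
The paper does not prove this theorem at all: it is quoted as a black box from Molloy and Reed's book \cite{ChromaticBook}, and the surrounding text explicitly says so (``We also use the following version of Talagrand's inequality''). There is therefore no ``paper's own proof'' to compare against.

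That said, your sketch is the standard derivation one finds in Molloy--Reed and is essentially correct. One small point worth tightening: in the product-estimate step you invoke the certificate for the value $X(\omega)$, obtaining $|I|\leq rX(\omega)$, and then appeal to monotonicity to pass to the threshold $s$. It is cleaner (and avoids the monotonicity detour) to apply condition $(ii)$ directly at the threshold value $s$: since $X(\omega)\geq s$, there is a certificate $I$ of size at most $rs$ guaranteeing $X\geq s$ on any point agreeing with $\omega$ on $I$; the hybrid $\omega^*$ then satisfies $X(\omega^*)\geq s$, and the Lipschitz bound gives $d\geq(s-b)/c$ and hence $d_T(\omega,A)\geq(s-b)/(c\sqrt{rs})$ immediately. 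Either way the conclusion is the same, and the remaining median-to-mean bookkeeping is as you describe.
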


%
%
%
%
%
%

\section{Proof of Theorem \ref{main1}}
\label{sec:main}
In this section we prove the following, seemingly weaker statement. Then, we show how to derive Theorem \ref{main1} by a simple application of Markov's inequality.

\begin{theorem}
\label{main:weak}
 Let $k\geq 3$ be a positive integer and let
$\frac{\log^{4k}n}{n^{k-1}}\leq p\leq \log^{20k}/n^{k-1}$. Then, whp a
hypergraph $H^k_{n\times k,p}$ contains
$(1-o(1))n^{k-1}p$ edge-disjoint perfect matchings.
\end{theorem}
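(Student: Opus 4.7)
The plan is to implement the two-phase online-sprinkling strategy from Section~\ref{sec:outline}. Set $\varepsilon = \varepsilon(n) = o(1)$ decaying slowly (say $\varepsilon = 1/\log\log n$) and $N := (1-\varepsilon)n^{k-1}p$. Phase~1 will produce $N$ edge-disjoint matchings $M_1,\ldots,M_N$, each of which leaves only a tiny set $U_i \subseteq V(H)$ of vertices uncovered; Phase~2 will extend each $M_i$ to a perfect matching using fresh random edges inside $U_i$. I aim to bound the total weight $\omega(e)$ accumulated on each $k$-tuple $e$ across both phases by $p$, so that the online-generated hypergraph is coupled with a subhypergraph of $H^k_{n\times k,p}$.

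\textbf{Phase 1 (rounds of nibbles).} Each round $i$ runs $s = \Theta(\log\log n)$ steps. Starting from $M_{i,0} = \emptyset$, at step $j$ let $A_{i,j}$ be the set of vertices not covered by $M_{i,j-1}$ and let $E_{i,j}$ consist of all $k$-tuples sitting entirely inside $A_{i,j}$ with one vertex in each part. Expose each $e \in E_{i,j}$ independently with probability $p_{i,j} = c/|A_{i,j}|^{k-1}$ for a small constant $c = c(k)$, and then add to $M_{i,j-1}$ a maximal set of pairwise disjoint exposed tuples to obtain $M_{i,j}$. A one-line first-moment argument shows that a constant fraction of the chosen tuples typically extend the matching, and Talagrand's inequality (Theorem~\ref{Talagrand}), applied to the number of new matching edges, gives $|A_{i,j+1}| \leq (1-\delta)|A_{i,j}|$ for some $\delta = \delta(k,c) > 0$ with high probability. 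After $s$ steps this brings $|U_i| := |A_{i,s}|$ below $n / \log^{C} n$ for an arbitrarily large constant $C$ (chosen so Phase~2 succeeds with huge room). A symmetry argument, based on the fact that every $k$-tuple in $E_{i,j}$ is exposed with the same probability and tie-breaking is done uniformly at random, shows that conditional on its size, $U_i$ is uniformly distributed among $k$-partite subsets of that size.

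\textbf{Weight control, the main obstacle.} For any fixed $k$-tuple $e$, its Phase~1 weight satisfies
\[
\omega_1(e) \;\leq\; \sum_{i=1}^{N} \sum_{j=1}^{s} p_{i,j}\,\mathbbm{1}[e \subseteq A_{i,j}].
\]
Inside a single round, the contribution of $e$ is dominated by the largest $p_{i,j}$ for which $e \subseteq A_{i,j}$, which is $\Theta(1/n^{k-1})$ at early steps and grows only if all $k$ endpoints of $e$ survive into late steps, an event of probability $\Theta((|A_{i,j}|/n)^k)$; a layered summation gives $\mathbb{E}[\omega_1(e)] \leq (1-\varepsilon)p + o(p)$. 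Theorem~\ref{colin} applied to the independent Phase~1 rounds then yields $\omega_1(e) \leq (1-\varepsilon/2)p$ simultaneously for all $e$ with probability $1-o(1)$, provided $p \geq \log^{4k}n/n^{k-1}$, the condition being dictated by beating an $n^k$-sized union bound with the variance proxy $\sum_{i,j} p_{i,j}^2$. I expect this concentration/union-bound balance to be the main technical hurdle: the exponent $4k$ in the lower bound on $p$ is exactly what is needed to make Theorem~\ref{colin}'s subgaussian tail swallow the union bound while still allowing $N = (1-\varepsilon)n^{k-1}p$ rounds.

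\textbf{Phase 2 and completion.} Independently for each $i \leq N$, expose every $k$-tuple inside $U_i$ with probability $q_i = \log^2 n/|U_i|^{k-1}$ and invoke Theorem~\ref{JKV kpartite} to obtain a perfect matching $N_i$ on $U_i$ with failure $n^{-\omega(1)}$, so a union bound over $i$ is safe; setting $\widetilde M_i := M_i \cup N_i$ produces $N$ perfect matchings. The Phase~2 weight contribution on a fixed $e$ is at most $\sum_i q_i \mathbbm{1}[e \subseteq U_i]$, which has expectation $O(N q_i \cdot (|U_i|/n)^k) = o(p)$ using $|U_i| \leq n/\log^C n$ and the uniformity of $U_i$ from Phase~1; another application of Theorem~\ref{colin} upgrades this to $\omega_2(e) \leq \varepsilon p/3$ whp uniformly in $e$. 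Edge-disjointness of $\widetilde M_1,\ldots,\widetilde M_N$ follows by the usual two-round coupling (the lemma called \textsc{matchings disjoint} in the outline): two rounds share the edge $e$ only if $e$ is exposed in both, an event of probability $\omega(e)^2/N$ per pair $(i,i')$, so summing gives expected overlap $o(1)$, handled by Markov. Combining the phases, the online hypergraph is stochastically dominated by $H^k_{n\times k, p}$ and contains the required $(1-o(1))\,n^{k-1}p$ edge-disjoint perfect matchings.
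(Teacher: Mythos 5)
Your proposal follows the paper's argument essentially step for step: nibble rounds with exposure probability proportional to $|A_{i,j}|^{-(k-1)}$, Talagrand for the per-step bite, uniformity of the uncovered sets by a symmetry/relabelling argument, weight control via the survival-probability computation followed by Theorem~\ref{colin} and a union bound, completion inside each $U_i$ via Theorem~\ref{JKV kpartite}, a Phase-2 weight of $o(p)$, and edge-disjointness by a first-moment count that needs $k\geq 3$ and the upper bound on $p$. So the route is the paper's; the only real issue is quantitative.

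The place where your bookkeeping, as written, does not close is the interplay between the nibble density $c$ and the slack $\varepsilon$. In one round the expected weight placed on a fixed tuple $e$ is $\sum_j p_{i,j}\Pr[e\subseteq A_{i,j}]\approx \frac{c}{n^{k-1}}\sum_j(1-\delta)^j\approx \frac{c}{\delta}\,n^{-(k-1)}=(1+\Theta(c))\,n^{-(k-1)}$, because the matched fraction per step is only $\delta=c(1-O(c))<c$ (exposed tuples that collide are wasted). Over $N=(1-\varepsilon)n^{k-1}p$ rounds this gives expected total weight $(1+\Theta(c))(1-\varepsilon)p$, which exceeds $p$ as soon as $\varepsilon=1/\log\log n$ drops below the constant $\Theta(c)$; so your claim $\mathbb{E}[\omega_1(e)]\leq(1-\varepsilon)p+o(p)$ is false for your parameter choice, and the coupling with $H^k_{n\times k,p}$ breaks. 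The repair is exactly what the paper does: fix $\varepsilon>0$ constant, take the nibble constant (the paper's $\delta$, with $\beta=10k\delta^2$) sufficiently small in terms of $\varepsilon$, prove the statement with $(1-\varepsilon)$ in place of $(1-o(1))$, and deduce the theorem by letting $\varepsilon\downarrow 0$ (alternatively, let $c=c(n)\to 0$ slowly). Relatedly, you cannot take the constant $C$ in $|U_i|\leq n/\log^{C}n$ ``arbitrarily large'': the Phase-2 probability $q_i=\log^2 n/|U_i|^{k-1}=\log^{2+C(k-1)}n/n^{k-1}$ must itself stay below $\varepsilon p/3$, and the typical number of rounds with $e\subseteq U_i$, about $N/\log^{Ck}n$, must be large enough for your concentration step; with $p\geq\log^{4k}n/n^{k-1}$ this forces $C$ to be a fixed small constant (the paper uses $\alpha=1/\log^3 n$, i.e.\ $C=3$).
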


\begin{proof}[Proof of Theorem \ref{main:weak}] Let $p$ be as in the assumption of the theorem. Let
$\varepsilon>0$, and let $\delta>0$ be a sufficiently small constant. Let
$\beta=10k \delta^2$, $\alpha=\frac{1}{\log^3n}$, and
$\ell$ be an integer such that $(1-\delta+\beta)^{\ell}=\alpha$ (we
omit flooring and ceiling signs as all of our proofs are
asymptotic and this will not harm our calculations).

We describe a randomized algorithm for generating a subhypergrpah
$H'$ of $H^k_{n\times k,p}$ which consists of
$N:=(1-\varepsilon)n^{k-1}p$ edge-disjoint perfect matchings $M_0,\ldots,M_{N-1}$.
Moreover, we show that the algorithm succeeds with a sufficiently
high probability, as required in the statement. As described in the outline (Section \ref{sec:outline}), our algorithm is divided into the following two main phases.

{\bf Phase 1.} Building $N$ edge-disjoint matchings, each of which of size $(1-\alpha)n$.

{\bf Phase 2.} Completing each of the matchings into a perfect matching, keeping all of them edge-disjoint.

\subsection{Phase 1.}

Phase 1. is in fact the heart of the proof and contains all the ideas which are needed for us. We divide Phase 1 into $N$ \emph{rounds}, where in each round $i$ we find a
matching $M_i$ which does not use edges from $\bigcup_{j<i}
M_j$. For $0\leq i\leq N-1$, round $i$ is divided into $\ell$
\emph{steps}, and for every $i$ and $j$ we refer to the $j$-th step of the $i$-th round
as \emph{time step} $ij$. In each time step $ij$ we form a
matching $M_{ij}$ by adding a small ``bite" to a previous matching $M_{i(j-1)}$, until
a matching $M_i:=M_{i\ell}$ of size $(1-\alpha)n$ is obtained. Initially, we set
$j=-1$ and for every $0\leq i<N$ we set $M_{ij}:=\emptyset$.
In order to build the $M_{ij}$s we expose ``relevant" edges with a carefully chosen probability
$q_{ij}$ (to be determined throughout the algorithm).

Before giving a formal description of the algorithm we introduce
some useful notation. An edge $e\in V_1\times\ldots \times V_k$ is called
\emph{relevant} at time step $ij$ if $e\cap
V(M_{i(j-1)})=\emptyset$. That is, if none of its vertices is
incident with an edge of $M_{i(j-1)}$ (note that we completely ignore the fact that few of those edges may belong to other matchings). In each time step $ij$, for
every $1\leq m\leq k$, let $U^{ij}_m:=V_m\setminus V(M_{i(j-1)})$ be the subset
of the \emph{uncovered} vertices of $V_m$ and observe that all these
sets are of the exact same size $n_{j}:=n-|M_{i(j-1)}|$. Let $R_{ij}$
denote the set of all relevant edges at time step $ij$, and note
that $R_{ij}$ corresponds to a complete $k$-partite hypergraph with
$U^{ij}_1\ldots U^{ij}_k$ as its parts.

{\bf The algorithm}  For
$i=0,1\ldots,N-1$ and $j=0,\ldots, \ell-1$ do the following. Randomly assign edges of $R_{ij}$
with color $ij$, independently, with probability $q_{ij}:=\delta
n_j^{-(k-1)}$ (note that an edge can be assigned with more than one
color). Among all the edges colored $ij$, choose a matching $M$ of
size exactly $(\delta-\beta)n_{j}$, uniformly at random. Then,
update $M_{ij}:=M_{i(j-1)}\cup M$. If such a matching does not
exist, then the algorithm reports an error and terminates.

\bigskip

Before we analyze to algorithm, let us make the following easy observation:
\begin{observation}\label{obs:1}
  Throughout the algorithm, assuming that it does not terminate, for every $0\leq j\leq \ell-1$ we have $n_j=(1-\delta+\beta)^jn$.
\end{observation}

\begin{proof}
Since for every $j\leq \ell-1$, in time step $ij$ we enlarge $M_{i(j-1)}$ by
$(\delta-\beta)n_j$, it follows that $n_{j+1}=n_j-(\delta-\beta)n_j=(1-\delta+\beta)n_j$. The observation now follows by a simple induction.
\end{proof}

First, we show that if the algorithm does not terminate, then whp all the obtained matchings are edge-disjoint.

\begin{claim}
  \label{matchings disjoint}
  All the $M_i$-s are edge disjoint whp.
\end{claim}

\begin{proof}
Recall that for each $i$, $M_i$ is formed by edges which are
colored $ij$ for some $j\leq \ell$ and that in each time step $ij$
an edge is colored $ij$ with probability $q_{ij}\leq p$. Moreover,
note that if at the end of the process each edge is assigned with at
most one color, then clearly the matchings are disjoint. Therefore, it will be enough to show that the probability for the existence of an edge $e\in V_1\times \ldots \times V_k$ which is being assigned at least two colors is $o(1)$.

To this end, observe that since $(1-\delta+\beta)^{\ell}=\alpha$, since $\delta$ and $\beta$ are constants and $\alpha=1/\log^3n$, it follows that $\ell=O(\log\log n)$.
Moreover, there are $T:=N\ell=O(n^{k-1}p\log\log n)=polylog(n)$ many time steps (recall that
we assume an upper bound on $p$), where in each time step $ij$, we color edges with probability $q_{ij}\leq p$ (Observation \ref{obs:1}), and the time steps are independent. Therefore, the probability that there exists an edge
 which is being colored at least twice is at most
$$n^kT^2p^2=\frac{n^kpolylog(n)}{n^{2k-2}}=\frac{polylog(n)}{n^{k-2}}.$$

Since $k\geq 3$, the result follows.
\end{proof}

Note that this is the only place where we use the fact that $k\geq 3$. For $k=2$ there are few overlapps between the matchings and it requires a bit more careful treatment. For an example illustrating how to deal with it, the reader is referred to \cite{FS}.

Second, we show that whp the algorithm described above does not terminate and that the sets $U_m$ as defined in the algorithm enjoys a uniform distribution.

\begin{claim}
  \label{small bite exists}
  For every $0\leq i\leq N-1$ and $j<\ell$, at time step $ij$, with probability $1-n^{-\omega(1)}$ we have a matching of size at
least $(\delta-\beta)n_j$. Moreover, by picking such a matching $M$ uniformly at random, for every $1\leq m\leq k$ we have that $M\cap U^{ij}_m$ is a subset of $U^{ij}_m$ of size $(\delta-\beta)n_j$, chosen according to a uniform distribution.
\end{claim}

\begin{proof}
  In order to prove the first part of Claim \ref{small bite exists} we make use of
  Theorem \ref{Talagrand}. Note that in each time step $ij$, the color
  class $ij$ (that is, the set of all edges which have been colored $ij$ during the algorithm) is distributed as $H^{k}_{n_j\times k,q_{ij}}$,
  with $q_{ij}=\frac{\delta}{n_j^{k-1}}$ and $n_j\geq \alpha n$. Therefore, it is enough to
  show that the probability for $H'=H^k_{m\times
  k,\delta/m^{k-1}}$ not to have a matching of size
  $(\delta-\beta)m$ is $m^{-\omega(1)}$, for every $m\geq \alpha n$.

  Let $X$ be the random variable corresponds to the size of the maximal matching we
  have in $H'$, and let $T_e$, $e\in R_{ij}$ be independent indicator random
  variables for the events ``$e\in E(H')$". Note that $X$ is determined
  by the $T_e$-s and that it trivially satisfies $(i)$ and $(ii)$ of
  Theorem \ref{Talagrand} with respect to $c=r=1$. Therefore, we
  have
  \begin{align}\label{ineq} \Pr\left[|X-\mathbb{E}(X)|>t+60c\sqrt{r\mathbb{E}(X)}\right]\leq
4\exp\left(-\frac{t^2}{8c^2r\mathbb{E}(X)}\right).
\end{align}

Now, for each $e\in R_{ij}$ we say that $e$ is an \emph{isolated}
edge in $H'$ if $e\in E(H')$ and all the vertices in $e$ have degree
exactly $1$ in $H'$. Therefore, we have
\begin{align*}
\Pr\left[e\text{ is isolated}\right]&=
\frac{\delta}{m^{k-1}}(1-\frac{\delta}{m^{k-1}})^{m^k-(m-1)^k}\\
&\geq \frac{\delta}{m^{k-1}}\left(1-\frac{\delta}{m^{k-1}}(m^k-(m-1)^k)\right)\\
&= \frac{\delta}{m^{k-1}}\left(1-\frac{\delta}{m^{k-1}}(km^{k-1}+O(m^{k-2}))\right)\\
&\geq (\delta-\beta/2)m^{-(k-1)},
\end{align*}

where here we made use of the facts $(1-x)^n\geq 1-nx$ for all $x>-1$ and $m^k-(m-1)^{k}=km^{k-1}+O(m^{k-2})$. 

All in all, we obtain that $$\mathbb{E}(X)\geq (\delta-\beta/2)m.$$ The result
now easily follows by plugging this estimate into \eqref{ineq} with (say) $t=\beta \mathbb{E}(X)/10$, using the fact that $m\geq \alpha n=n/\log^3n$.

For the second part of the claim, note that one can relabel the vertices of each $U^{ij}_m$ according to a permutation $\pi_m: U^{ij}_m\rightarrow U^{ij}_m$, chosen uniformly, independently at random. Then, after picking the desired matching $M$, one can assign each vertex $v$ with the ``original" label by applying $\pi^{-1}_m(v)$. Clearly, this procedure gives as a uniformly chosen subset of $U^{ij}_m$, for every $m$, as desired.
\end{proof}

For every $i$, let us denote by $U_i:= V_1\times \ldots \times V_k \setminus V(M_i)$, to be the set of all vertices which are \emph{uncovered} by the matching $M_i$. Observe that $U_i:=S_1\times \ldots \times S_k$ for some $S_j\subseteq V_j$, each of which is of size exactly $\alpha n$.
The following claim which follows almost immediately from Claim \ref{small bite exists} and will serve us in Phase 2.

\begin{claim}
  \label{garbage is uniform}
At the end of the algorithm, whp we have that for every $e\in V_1\times \ldots \times V_k$, the number of indices $i$ for which $e\in U_i$ is at most $2\alpha^kN$.
\end{claim}

\begin{proof}
  Let $e\in U_i$, where $U_i:=S_1\times \ldots \times S_k$ as described above. By Claim \ref{small bite exists}, we conclude that each of the $S_m$, $1\leq m\leq k$, is a subset of $V_m$ of size precisely $\alpha n$, chosen uniformly, independently at random. Therefore, the probability for $e\in U_i$ is $\alpha^k$, and the expected number of $i$s for which $e\in U_i$ is $\alpha^k N$. Since the rounds run independently, by Chernoff's bounds we obtain that the probability of $e$ to be in more than $2\alpha^kN$ such $U_i$s is at most
  $$e^{-\Theta(\alpha^kN)}=e^{-\Theta(\alpha^kn^{k-1}p)}=o(n^{-k}).$$
  Note that here we make use of the fact that $p=\log^{C}n/n^{k-1}$, where $C$ depends on $k$.
  Therefore, by taking the union bound over all possible $e\in V_1\times \ldots \times V_k$, we obtain the desired.
\end{proof}

To conclude, let $H'$ be the hypergraph consisting of all the edges which
have received any color during the algorithm. We show that indeed
$H'$ can be coupled as a subhypergraph of $H=H^k_{n\times k,(1-\varepsilon/2)p}$. To
this end, it will be convenient to introduce some notation. For
every $e\in V_1\times\ldots\times V_k$, let us define $R(e):=\{ij: e
\in R_{ij}\}$ (note that $R(e)$ is a random variable, and that for every $i,j$, at the beginning of time step $ij$ it is already known whether $ij\in R(e)$ or not). Observe that for each $ij\in R(e)$, at time step $ij$
we try to assign $e$ with the color $ij$, with probability $q_{ij}$,
independently at random. Let $\gamma>0$ be a sufficiently small constant (to be determined later), since the probability of $e$ not being
colored with any color, conditioned on $R(e)$, is $1-q_e:=\prod_{ij\in R(e)}(1-q_{ij})$, it
follows that $\Pr\left[e\in E(H')\right]=q_e\leq (1+\gamma)\sum_{ij \in
R(e)}q_{ij}$ (here we use the fact that $(N\ell)^2p=o(1)$, as $p\leq \log^{20k}n/n^{k-1}$).
Therefore, in order to show that one can generate $H'\subseteq H$, all we need to show is
that by following our algorithm, whp
we have $q_e\leq (1-\varepsilon/2)p$ for every  $e$. This is done in the
following (quite) technical claim.

\begin{claim}
  \label{claim1}
 With probability $1-n^{-\omega(1)}$ we have that $q_e\leq (1-\varepsilon/2)p$ for
 every $e\in V_1\times \ldots \times V_k$.
\end{claim}

\begin{proof}
  For every $0\leq i\leq N-1$ and $e\in V_1\times\ldots\times V_k$ consider the random variable
  $$\omega_i(e)=\sum_{j:ij\in R(e)}q_{ij},$$
  and observe that
  $$q_e\leq (1+\gamma)\sum_i\omega_i(e).$$

  Moreover, by the description of the algorithm it follows that $$q_{ij}\leq \delta/(\alpha n)^{k-1}=\delta\log^{3k-3}n/n^{k-1}$$
  for every $i$ and $j$, and therefore (deterministically) we have

  $$\omega_i(e)\leq \ell \delta\log^{3k-3}n/n^{k-1}\leq \log^{3k-2}n/n^{k-1}.$$

  In order to complete the proof we need to show two things. First,
  we show that $\mathbb{E}(\omega_i(e))\leq (1-\gamma)n^{-(k-1)}$ (and therefore, we obtain $\mathbb{E}(q_e)\leq (1-\gamma^2)n^{-(k-1)}N\leq (1-\varepsilon)p$). Then, using standard concentration bounds,
  we show that with probability $1-n^{-\omega(1)}$
  we have $q_e\leq (1-\varepsilon/2)p$.

  {\bf Estimating $\mathbb{E}(\omega_i(e))$.} Note
  that $ij\in R(e)$ if and only if $e$ is relevant at time step
  $ij$, and that by Claim \ref{small bite exists} we observe that at each time step $ij$ of the
  algorithm, any vertex $v\in U_m$ is being ``matched" with
  probability $\delta-\beta$, where vertices from different
  $U_m$-s are independent. Therefore, at each time step, the probability for a relevant $e$ to stay relevant is
  $(1-\delta+\beta)^k$, and the probability for not staying
  relevant is $1-(1-\delta+\beta)^k$, which is roughly
  $k(\delta-\beta)$ (recall that $\delta$ is sufficiently small).

  Now, for each $j\leq \ell-1$, let us denote by $A_j$ the event
  ``$j$ is the maximal index for which $e$ is relevant at time step
  $ij$", and observe that
\begin{align}\label{eq:expectation}\mathbb{E}(\omega_i(e))&=\sum_{j=0}^{\ell-1}\Pr\left[A_j\right]\sum_{s=0}^jq_{is}\nonumber \\
&=\sum_{s=0}^{\ell-1}q_{is}\sum_{j=s}^{\ell-1}\Pr\left[A_j\right].
\end{align}

Note that $\sum_{j=s}^{\ell-1}\Pr\left[A_j\right]$ is the probability for an edge to be relevant \emph{at least} $s$ steps, and therefore is equal to $(1-\delta+\beta)^{ks}$.
   Combining it with \eqref{eq:expectation}, we get that
   $$\mathbb{E}(\omega_i(e))=\sum_{s=0}^{\ell-1}q_{is}(1-\delta+\beta)^{ks}.$$
   Recalling that $q_{is}=\delta/n_s^{k-1}$, and that $n_s=(1-\delta+\beta)^{s}n$ (Observation \ref{obs:1}), we obtain that
   \begin{align}\mathbb{E}(\omega_i(e))&=\sum_{s=0}^{\ell-1}\frac{\delta}{(1-\delta+\beta)^{s(k-1)}n^{k-1}}(1-\delta+\beta)^{sk}\nonumber \\
   &= \frac{\delta}{n^{k-1}}\sum_{s=0}^{\ell-1}(1-\delta+\beta)^s \nonumber \\
   &=\frac{\delta}{n^{k-1}}\frac{1-(1-\delta+\beta)^{\ell}}{\delta-\beta}.\nonumber
   \end{align}
   (The second equality is just the sum of a geometric series.)

Now, since $\delta$ is a sufficiently small constant, since $\beta/\delta$ tends to zero with $\delta$, and by the way we chose $\ell$, we obtain that the right hand side in the above equality is at most $(1-\gamma) n^{-(k-1)}$. Therefore, we obtain that
$$\mathbb{E}(q_e)\leq (1+\gamma)\sum_i \mathbb{E}(\omega_i(e))\leq (1-\gamma^2)n^{-(k-1)}N\leq (1-\varepsilon)p.$$

{\bf Showing that $q_e\leq p$ with a sufficient probability.} Consider the random
variables $\omega_i(e)$, $0\leq i\leq N-1$ and observe that they are mutually
independent. Moreover, as noted above, $0\leq \omega_i(e)\leq
\log^{3k-2}n/n^{k-1}$ for every $i$. Now, let
$\omega^*=\sum_{i=0}^{N-1} \omega_i(e)$ and recall that $\mathbb{E}(\omega^*)\leq
(1-\varepsilon)p$. By applying Theorem \ref{colin} to $\omega^*$
we obtain

\begin{align*}
\Pr\left[|\omega^*-\mathbb{E}(\omega^*)|\geq \varepsilon
p/2\right]&\leq 2\exp\left(-\frac{0.5\varepsilon^2
p^2}{N\log^{6k-4}nn^{-2(k-1)}}\right)\\
&\leq 2\exp\left(-\frac{ 0.5\varepsilon^2 pn^{k-1}}{(1-\varepsilon)\log^{6k-4}}\right)=n^{-\omega(1)}.
\end{align*}
Taking the union bound over all possible $e\in V_1\times\ldots\times
V_k$ we obtain the desired. This completes the proof of the claim.
\end{proof}

\subsection{Phase 2.}

In this phase, we want to show that one can complete each of the $M_i$s from Phase 1. into a perfect matching in an edge-disjoint way. To this end, let $U_i$, $0\leq i\leq N-1$ denote the set of all vertices which are uncovered by $M_{i}$, and let $q=\log^5 n/n^{k-1}$. Observe that $|U_i|=k\alpha n=kn/\log^3n$, and that $q=\omega(\log |U_i|/|U_i|^{k-1})$.
For every $i$, let us expose all the $k$-tuples $e\in U_i$, with probability $q$, independently at random, and denote the resulting graph as $H_i$. Clearly, $H_i=H^k_{|U_i|,q}$. Now, by applying Theorem \ref{JKV kpartite} to $H_i$ and by taking the union bound over all $i$, it follows that $H_i$ contains a perfect matching $Q_i$ for all $i$.
Let $M_i:=M_i\cup Q_i$, and observe that each of the $M_i$s is a perfect matching of $H$.

In order to complete the proof, we need to show:
\begin{enumerate}
  \item all the $M_i$s are edge-disjoint, and
  \item no edge accumulated a weight of more than $\varepsilon p/3$.
\end{enumerate}

$1.$ follows in a similar way as in Claim \ref{matchings disjoint}. For $2.$, note that by Claim \ref{garbage is uniform} we have that no edge belongs to more than $2\alpha^kN$ many $U_i$s. Therefore, since we expose edges of $U_i$s with probability $q$, every edge accumulates a weight of at most $2\alpha^kNq=2n^{k-1}p\log^5 n/\log^{3k}n^{k-1}=o(p)$, as desired. This completes the proof of Theorem \ref{main:weak}.
\end{proof}

\subsection{Derivation of Theorem \ref{main1}}

In this section we show how to derive Theorem \ref{main1} from Theorem \ref{main:weak}.

Let $p\geq \log^{20k}n/n^{k-1}$ and let $r\in \mathbb{N}$ be an integer for which
$\frac{\log^{4k}n}{n^{k-1}}\leq p/r\leq
\frac{\log^{20k}n}{n^{k-1}}$. Now, expose the edges of
$H^k_{n\times k,p}$, and for each exposed edge, immediately assign with a color from
$[r]$, independently, uniformly at random.

Observe that for each color
class $i\in [r]$, the corresponding hypergraph $H_i$ is distributed as
$H^k_{n\times k,p/r}$ and that for every $i\neq j$, $E(H_i)\cap E(H_j)=\emptyset$. Therefore, by applying Theorem \ref{main:weak} to $H_i$, with probability $1-o(1)$ $H_i$ contains $(1-o(1))n^{k-1}p/r$ edge-disjoint perfect matchings. Using Markov's inequality we obtain
that whp for $r-o(r)$ hypergraphs $H_i$, the above holds, and therefore, $H=\cup H_i$ contains $(1-o(1)rn^{k-1}p/r=(1-o(1))n^{k-1}p$ edge-disjoint perfect matchings as desired.

{\bf Acknowledgment.} We would like to thank the referees for many valuable comments.


\begin{thebibliography}{10}


\bibitem{AKS}  M. Ajtai, J. Komlos, and E. Szemer\'edi, {\it A dense infinite Sidon sequence,}  Eur J Combinat 2 (1981), 1Ð11. MR0611925 (83f:10056)

\bibitem{bollobas1984evolution}
B.~Bollob{\'a}s.
\newblock The evolution of random graphs.
\newblock {\em Transactions of the American Mathematical Society},
  286(1):257--274, 1984.

\bibitem{bollobas1998random}
B.~Bollob{\'a}s.
\newblock {\em Random graphs}.
\newblock Springer, 1998.

\bibitem{ErdosRenyi}
P.~Erd{\H o}s and A.~R{\'e}nyi.
\newblock On the evolution of random graphs.
\newblock {\em Magyar Tud. Akad. Mat. Kutat{\'o} Int. K{\"o}zl.}, 5:17--61,
  1960.

\bibitem{FS} A. Ferber and W. Samotij, \emph{Packing trees of unbounded degrees in random graphs}, arXiv preprint.


\bibitem{FK} A. Frieze and M. Krivelevich, \emph{Packing hamilton cycles in random and pseudo-random hypergraphs}, Random Structures and Algorithms, 41(1) (2012), 1--22.

\bibitem{GM}S. Gerke, and A. McDowell, \emph{Nonvertex-Balanced Factors in Random Graphs}, Journal of Graph Theory 78.4 (2015) 269--286.

\bibitem{JLR}
S. Janson, T. {\L}uczak and A. Rucinski, \textbf{Random graphs},
 John Wiley \& Sons, 2000.

\bibitem{JKV}A. Johansson, J. Kahn and V. Vu, Factors in random graphs, Random Structures and Algorithms, 33(2008) (1), 1--28.


\bibitem{Keevash} P. Keevash, The existence of designs, arXiv preprint arXiv:1401.3665 (2014).

\bibitem{McD} C. McDiarmid, Concentration, {\bf Probabilistic methods for
algorithmic discrete mathematics}. Springer Berlin Heidelberg, 1998.
195--248.

\bibitem{MR} C. McDiarmid and B. Reed, \emph{Concentration for self-bounding functions and an inequality of Talagrand}, Random Structures and Algorithms, 29(4) (2016), 549--557.

\bibitem{ChromaticBook}  M. Molloy, and B. Reed, \textbf{Graph Colouring and the Probabilistic Method}. Springer, Berlin,
2002.

\bibitem{posa1976hamiltonian}
L.~P{\'o}sa.
\newblock Hamiltonian circuits in random graphs.
\newblock {\em Discrete Mathematics}, 14(4):359--364, 1976.

\bibitem{Rodl}
V. R\"odl, On a packing and covering problem, European Journal of
Combinatorics 6, no. 1 (1985), 69--78.

\bibitem{S}
W. Samotij, \emph{Stability results for random discrete structures},
Random Structures and Algorithms 44.3 (2014): 269-289.

 \bibitem{West}
D.\ B.\ West, {\bf Introduction to Graph Theory}, Prentice Hall,
2001.
\end{thebibliography}
\end{document}